%% LyX 2.0.3 created this file.  For more info, see http://www.lyx.org/.
%% Do not edit unless you really know what you are doing.
\documentclass[oneside,english]{amsart}
\usepackage[T1]{fontenc}
\usepackage[latin9]{inputenc}
\usepackage{amsthm}
\usepackage{amssymb}

\makeatletter
%%%%%%%%%%%%%%%%%%%%%%%%%%%%%% Textclass specific LaTeX commands.
\numberwithin{equation}{section}
\numberwithin{figure}{section}
\theoremstyle{plain}
\newtheorem{thm}{\protect\theoremname}
  \theoremstyle{plain}
  \newtheorem{lem}[thm]{\protect\lemmaname}
  \theoremstyle{plain}
  \newtheorem{conjecture}[thm]{\protect\conjecturename}
  \theoremstyle{remark}
  \newtheorem{rem}[thm]{\protect\remarkname}

\makeatother

  \providecommand{\conjecturename}{Conjecture}
  \providecommand{\lemmaname}{Lemma}
  \providecommand{\remarkname}{Remark}
\providecommand{\theoremname}{Theorem}

\begin{document}
%ProcAMS
\author{Minoru Hirose}
\address[Minoru Hirose]{Faculty of Mathematics, Kyushu University
 744, Motooka, Nishi-ku, Fukuoka, 819-0395, Japan}
\email{m-hirose@math.kyushu-u.ac.jp}
\author{Hideki Murahara}
\address[Hideki Murahara]{Nakamura Gakuen University Graduate School,
 5-7-1, Befu, Jonan-ku, Fukuoka, 814-0198, Japan} 
\email{hmurahara@nakamura-u.ac.jp}
\author{Shingo Saito}
\address[Shingo Saito]{Faculty of Arts and Science, Kyushu University,
 744, Motooka, Nishi-ku, Fukuoka, 819-0395, Japan}
\email{ssaito@artsci.kyushu-u.ac.jp}
\subjclass[2010]{Primary 11M32}
\keywords{multiple zeta values, finite multiple zeta values, weighted sum formula, multiple harmonic sums modulo primes}

\title{Weighted sum formula for multiple harmonic sums modulo primes}

\date{\today}
\begin{abstract}
In this paper we prove a weighted sum formula for multiple harmonic
sums modulo primes, thereby proving a weighted sum formula for finite
multiple zeta values. Our proof utilizes difference equations for
the generating series of multiple harmonic sums. We also conjecture
several weighted sum formulas of similar flavor for finite multiple
zeta values.
\end{abstract}
\maketitle

\section{Introduction}

For integers $k_{1},\dots,k_{d}\geq1$ and a prime $p$, we put
\begin{align*}
H_{p}(k_{1},\dots,k_{d}) & :=\sum_{0<n_{1}<\cdots<n_{d}<p}n_{1}^{-k_{1}}\cdots n_{d}^{-k_{d}}\in\mathbb{F}_{p}:=\mathbb{Z}/p\mathbb{Z},\\
H_{p}^{\star}(k_{1},\dots,k_{d}) & :=\sum_{0<n_{1}\leq\cdots\leq n_{d}<p}n_{1}^{-k_{1}}\cdots n_{d}^{-k_{d}}\in\mathbb{F}_{p}:=\mathbb{Z}/p\mathbb{Z}.
\end{align*}
The purpose of this paper is to prove the following theorem:
\begin{thm}[Weighted sum formula for multiple harmonic sums modulo primes]
\label{thm:main_harmonic_sum}Let $k$ be a positive integer, $d$
a positive odd integer, $p$ an odd prime larger than $d$, and $i$
an integer with $1\leq i\leq d$. Then we have
\[
\sum_{\substack{k_{1},\dots,k_{d}\geq1\\
k_{1}+\cdots+k_{d}=k
}
}2^{k_{i}}H_{p}(k_{1},\dots,k_{d})=\begin{cases}
0 & p-1\nmid k,\\
-1 & p-1\mid k.
\end{cases}
\]

\end{thm}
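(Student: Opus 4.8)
The plan is to pass to generating series in a single variable and reduce the theorem to an identity of rational functions over $\mathbb{F}_{p}$. Writing $t/(n-t)=\sum_{k\geq1}n^{-k}t^{k}$ for $0<n<p$, substituting $t_{j}=t$ for $j\neq i$ and $t_{i}=2t$ into the depth-$d$ generating series yields
\[
G_{i}(t):=\sum_{0<n_{1}<\cdots<n_{d}<p}\frac{2t}{n_{i}-2t}\prod_{j\neq i}\frac{t}{n_{j}-t}=\sum_{k\geq1}\Bigl(\sum_{k_{1}+\cdots+k_{d}=k}2^{k_{i}}H_{p}(k_{1},\dots,k_{d})\Bigr)t^{k},
\]
so the left-hand side of the theorem is $[t^{k}]G_{i}(t)$. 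On the other side, the depth-one series has closed form, via $\prod_{n=1}^{p-1}(X-n)=X^{p-1}-1$ in $\mathbb{F}_{p}[X]$ and its logarithmic derivative,
\[
R(t):=\sum_{0<n<p}\frac{t}{n-t}=\frac{t^{p-1}}{t^{p-1}-1}=-\sum_{j\geq1}t^{j(p-1)},
\]
whose $k$-th coefficient is $-1$ when $p-1\mid k$ and $0$ otherwise. Thus the theorem is equivalent to the single identity $G_{i}(t)=R(t)$ in $\mathbb{F}_{p}(t)$, for every $i$; note that Fermat's little theorem is already built into $R$, since $2^{p-1}=1$ reconciles the spurious factor $2^{k}$ in the depth-one case.

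To evaluate $G_{i}(t)$ I would split the tuple at the distinguished index $m:=n_{i}$ and use that a uniform-speed truncated series is a product: with $a:=t(1-u)$ one has $\sum_{e\geq0}u^{e}\sum_{0<n_{1}<\cdots<n_{e}<m}\prod_{j}t/(n_{j}-t)=\prod_{n=1}^{m-1}(n-a)/(n-t)$, and likewise above $m$ with a variable $v$ and $b:=t(1-v)$. Since $\prod_{n\neq m}(n-t)=(t^{p-1}-1)/(m-t)$, this turns $G_{i}(t)$ into the coefficient $[u^{i-1}v^{d-i}]$ of
\[
\Xi(u,v)=\frac{2t}{t^{p-1}-1}\sum_{m=1}^{p-1}\frac{m-t}{m-2t}\Bigl(\prod_{n=1}^{m-1}(n-a)\Bigr)\Bigl(\prod_{n=m+1}^{p-1}(n-b)\Bigr).
\]
The two elementary inputs that drive the evaluation are $\prod_{n=1}^{p-1}(n-X)=X^{p-1}-1$ and $\sum_{n=1}^{p-1}1/(n-X)=X^{p-2}/(X^{p-1}-1)$, both immediate from the factorization of $X^{p-1}-1$.

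As a tractable core, setting $u=v=w$ merges the two products into $\prod_{n\neq m}(n-a)=(a^{p-1}-1)/(m-a)$, so the inner sum becomes a genuine rational function of $m$; a partial-fraction computation together with the two displayed identities gives a closed form for $\Theta(w):=\Xi(w,w)$, of the shape $\Theta(w)=\frac{c}{(c-1)(1+w)}\bigl(\frac{c\rho-1}{c-1}+\frac{2w\rho}{1-w}\bigr)$ with $c=t^{p-1}$ and $\rho=(1-w)^{p-1}$. The Frobenius identity $(1-w)^{p-1}=1+w+\cdots+w^{p-1}$ lets one read off coefficients (legitimately up to degree $d-1\leq p-2$, which is where $d<p$ is used), and extracting $[w^{d-1}]$ — here the hypothesis that $d$ is \emph{odd} makes the odd-degree contributions cancel — yields $\sum_{i=1}^{d}G_{i}(t)=[w^{d-1}]\Theta(w)=d\,R(t)$. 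Hence the theorem already holds on average over $i$.

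The main obstacle is to upgrade this averaged identity to the individual statement $G_{i}(t)=R(t)$, equivalently to prove independence of $i$. For $u\neq v$ the product over $n<m$ and the product over $n>m$ no longer merge, so $\Xi(u,v)$ is a sum of hypergeometric-type terms in $m$ rather than a rational function, and the partial-fraction trick fails. This is exactly where difference equations for the generating series must do the work: the truncated products satisfy first-order recurrences in $m$, and I would seek an antidifference $g(m)$ with $g(m+1)-g(m)$ equal to the summand, so that $\Xi(u,v)$ collapses to boundary contributions at $m=1$ and $m=p-1$. Evaluating those boundary terms — again through the two elementary identities and $2^{p-1}=1$, with the parity of $d$ controlling the surviving terms — should produce a closed form manifestly free of $i$; combined with the averaged computation this would force $G_{i}(t)=R(t)$ for every $i$. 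Constructing the correct antidifference and carrying out the boundary evaluation is the technical heart of the argument.
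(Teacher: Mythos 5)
There is a genuine gap: the proof is not completed. Your reduction of the theorem to the identity $G_{i}(t)=R(t)=t^{p-1}/(t^{p-1}-1)$ is correct and matches the paper's own restatement, and your diagonal computation $u=v=w$ is sound --- the partial-fraction evaluation of $\Theta(w)$ and the coefficient extraction using $(1-w)^{p-1}=1+w+\cdots+w^{p-1}$ (valid for degrees up to $d-1\leq p-2$, with the parity of $d$ entering exactly as you say) do yield $\sum_{i=1}^{d}G_{i}(t)=d\,R(t)$. But this only proves the theorem \emph{averaged over} $i$. The actual statement requires $G_{i}(t)=R(t)$ for each individual $i$, i.e.\ that all coefficients $[u^{e}v^{d-1-e}]\Xi(u,v)$ for $0\leq e\leq d-1$ are equal, and for that you offer only a plan: find an antidifference in $m$ for the summand $\frac{m-t}{m-2t}\prod_{n<m}(n-a)\prod_{n>m}(n-b)$ so that the sum telescopes to boundary terms. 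You give no construction of such an antidifference and no reason one should exist (the summand is a product of two shifted-factorial factors with distinct parameters $a$ and $b$, and Gosper-type summability of such terms is not automatic), nor do you explain how the resulting closed form would exhibit the required equality of the mixed coefficients. You yourself flag this as ``the technical heart of the argument,'' and it is precisely the part that is missing.

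For comparison, the paper resolves this $i$-dependence issue by an entirely different mechanism: difference equations in the \emph{spectral} variable $x$ rather than the summation index. It introduces $G_{a,b}(x)=f(x,\dots,x,2x,x,\dots,x)$ together with an auxiliary family $S_{a,b}(x)$ built from the argument $2x-1$, derives shift relations under $x\mapsto x-1$ (Lemmas \ref{lem:Gab-Sab} and \ref{lem:Sab-Gab}), and shows by induction on $a+b$ that $q(x):=G_{a,b}(x)-U(x)/2x^{a+b+1}$ satisfies $q(x-1)=q(x)$; a periodic rational function over $\mathbb{F}_{p}$ with no pole at $x=0$ must vanish, which gives the identity for every pair $(a,b)=(i-1,d-i)$ separately. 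If you wish to salvage your route, you would need either to carry out the telescoping in $m$ explicitly for general $(u,v)$, or to import a difference-equation argument of the paper's type; as written, the claimed conclusion $G_{i}(t)=R(t)$ does not follow from what you have established.
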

We put $\mathcal{A}:=\prod_{p}\mathbb{F}_{p}/\bigoplus_{p}\mathbb{F}_{p}$.
For $k_{1},\dots,k_{d}\in\mathbb{Z}_{\geq1}$, Kaneko and Zagier \cite{KZ}
defined $\mathcal{A}$-finite multiple zeta(-star) values by
\begin{align*}
\zeta_{\mathcal{A}}(k_{1},\dots,k_{d}) & :=(H_{p}(k_{1},\dots,k_{d}))_{p}\in\mathcal{A},\\
\zeta_{\mathcal{A}}^{\star}(k_{1},\dots,k_{d}) & :=(H_{p}^{\star}(k_{1},\dots,k_{d}))_{p}\in\mathcal{A}.
\end{align*}
These are analogues of classical real-valued multiple zeta(-star)
values. Theorem \ref{thm:main_harmonic_sum} implies the following
corollary:
\begin{thm}[Weighted sum formula for finite multiple zeta(-star) values]
\label{thm:main_finite_mzv}Let $k$ be a positive integer, $d$
a positive odd integer, and $i$ an integer with $1\leq i\leq d$.
Then we have
\end{thm}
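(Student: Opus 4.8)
The plan is to deduce Theorem~\ref{thm:main_finite_mzv} from Theorem~\ref{thm:main_harmonic_sum} by base change along the quotient map $\prod_{p}\mathbb{F}_{p}\to\mathcal{A}$; the essential input is Theorem~\ref{thm:main_harmonic_sum} itself, and what remains is to organize this passage. For the non-star identity I would write
\[
\sum_{\substack{k_{1},\dots,k_{d}\geq1\\k_{1}+\cdots+k_{d}=k}}2^{k_{i}}\zeta_{\mathcal{A}}(k_{1},\dots,k_{d})=\left(\sum_{\substack{k_{1},\dots,k_{d}\geq1\\k_{1}+\cdots+k_{d}=k}}2^{k_{i}}H_{p}(k_{1},\dots,k_{d})\right)_{p}
\]
and invoke Theorem~\ref{thm:main_harmonic_sum}: the $p$-th component vanishes unless $p-1\mid k$, and for fixed $k$ the divisibility $p-1\mid k$ forces $p\leq k+1$, so only finitely many components are nonzero. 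Any such sequence represents $0$ in $\mathcal{A}=\prod_{p}\mathbb{F}_{p}/\bigoplus_{p}\mathbb{F}_{p}$, giving the vanishing of the non-star weighted sum.

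For the star identity I would first record two facts in $\mathcal{A}$. Every depth-one value is zero, since $\zeta_{\mathcal{A}}(m)=(H_{p}(m))_{p}$ and $H_{p}(m)\equiv0\pmod p$ for all $p>m+1$; and the harmonic-product relation $H_{p}^{\star}(k_{1},\dots,k_{d})=\sum_{T}H_{p}(\mathrm{merge}_{T}(k_{1},\dots,k_{d}))$ holds, where $T$ ranges over the subsets of the $d-1$ gaps between consecutive arguments at which the arguments are added and $\mathrm{merge}_{T}$ is the resulting tuple of depth $d-|T|$. Substituting this into the star weighted sum and interchanging the order of summation, I would group the terms by the depth $d'=d-|T|$ of the merged tuple. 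The term with $d'=d$, coming from $T=\emptyset$, is exactly the non-star weighted sum, hence $0$ by the first part, and the term with $d'=1$ is a scalar multiple of $\zeta_{\mathcal{A}}(k)$, hence $0$.

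The main obstacle is the intermediate range $2\leq d'\leq d-1$, which is already nonempty at $d=3$. Theorem~\ref{thm:main_harmonic_sum} does not apply there: once $2^{k_{i}}$ is summed over the compositions refining a fixed merged tuple the weight is no longer of the form $2^{(\text{part})}$, so these terms are not weighted sums of the type controlled by Theorem~\ref{thm:main_harmonic_sum}. For $d=3$ and $i=1$ the intermediate part equals $\sum 2^{k_{1}}\bigl(\zeta_{\mathcal{A}}(k_{1}+k_{2},k_{3})+\zeta_{\mathcal{A}}(k_{1},k_{2}+k_{3})\bigr)$, and the antisymmetry $\zeta_{\mathcal{A}}(a,b)=-\zeta_{\mathcal{A}}(b,a)$ (a consequence of the harmonic product together with the vanishing of depth one) reduces it to nontrivial relations among finite double zeta values, such as $2\zeta_{\mathcal{A}}(1,4)+\zeta_{\mathcal{A}}(2,3)=0$ in weight five; these are not formal, reflecting the expression of finite double zeta values through Bernoulli numbers. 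I therefore expect the clean route to be to prove the star analogue of Theorem~\ref{thm:main_harmonic_sum} directly, by running the same difference-equation argument on the generating series of $H_{p}^{\star}$, with the identical right-hand side. The star case of Theorem~\ref{thm:main_finite_mzv} would then follow verbatim from the non-star passage to $\mathcal{A}$, and the intermediate-depth cancellation would be subsumed rather than checked by hand.
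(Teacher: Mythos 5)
Your treatment of the non-star identity (\ref{eq:main_A}) is correct and is exactly the paper's argument: Theorem \ref{thm:main_harmonic_sum} makes the $p$-th component vanish whenever $p-1\nmid k$, hence for all but finitely many $p$, and such a sequence is $0$ in $\mathcal{A}$. The star identity (\ref{eq:main_A_star}), however, is left unproved. You correctly diagnose that the stuffle expansion of $H_{p}^{\star}$ produces intermediate-depth terms ($2\leq d'\leq d-1$) that Theorem \ref{thm:main_harmonic_sum} does not control, but your proposed repair --- proving a star analogue of Theorem \ref{thm:main_harmonic_sum} by rerunning the difference-equation machinery on the generating series of $H_{p}^{\star}$ --- is only announced, not carried out, and the assertion that the right-hand side would be ``identical'' is not justified. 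As it stands, half of the theorem is missing.

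The paper closes this gap without touching the stuffle product, using instead the antipode relation (Lemma \ref{lem:antipode})
\[
\sum_{j=0}^{d}(-1)^{j}\zeta_{\mathcal{A}}^{\star}(k_{1},\dots,k_{j})\zeta_{\mathcal{A}}(k_{d},\dots,k_{j+1})=0
\]
together with the unweighted sum formulas (\ref{eq:trivial_weight_zero}) and (\ref{eq:trivial_weight_zero_star}). The key observation is that after multiplying by $2^{k_{i}}$ and summing over $k_{1}+\cdots+k_{d}=k$, for each intermediate $j$ the weight $2^{k_{i}}$ sits entirely inside one of the two factors, so the other factor can be summed freely and annihilated by the appropriate unweighted sum formula: the terms with $i\leq j\leq d-1$ die by the non-star sum formula and those with $1\leq j<i$ die by the star sum formula. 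Only $j=0$ and $j=d$ survive, giving
\[
\sum_{\substack{k_{1},\dots,k_{d}\geq1\\ k_{1}+\cdots+k_{d}=k}}2^{k_{i}}\zeta_{\mathcal{A}}(k_{d},\dots,k_{1})=(-1)^{d+1}\sum_{\substack{k_{1},\dots,k_{d}\geq1\\ k_{1}+\cdots+k_{d}=k}}2^{k_{i}}\zeta_{\mathcal{A}}^{\star}(k_{1},\dots,k_{d}),
\]
and the left-hand side is the non-star weighted sum with $i$ replaced by $d+1-i$, hence $0$ by (\ref{eq:main_A}). This is the ingredient your argument needs: a reduction of the star weighted sum to the already-established non-star one, rather than a re-proof of everything at the level of $H_{p}^{\star}$.
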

\begin{align}
\sum_{\substack{k_{1},\dots,k_{d}\geq1\\
k_{1}+\cdots+k_{d}=k
}
}2^{k_{i}}\zeta_{\mathcal{A}}(k_{1},\dots,k_{d}) & =0,\label{eq:main_A}
\end{align}
\begin{align}
\sum_{\substack{k_{1},\dots,k_{d}\geq1\\
k_{1}+\cdots+k_{d}=k
}
}2^{k_{i}}\zeta_{\mathcal{A}}^{\star}(k_{1},\dots,k_{d}) & =0.\label{eq:main_A_star}
\end{align}

Note that several similar weighted sum formulas for multiple zeta
values are already known \cite{GuoLiXie,OhnoZudilin,OngEieLiaw},
and somewhat different weighted sum formulas for $\mathcal{A}$-finite
multiple zeta values were proved by Kamano in \cite{Kamano}.

In Section \ref{sec:proof}, we prove Theorems \ref{thm:main_harmonic_sum}
and \ref{thm:main_finite_mzv}. In Section \ref{sec:Further-conjecture},
we conjecture several weighted sum formulas of similar flavor.

\section{\label{sec:proof}proof of main theorem}

Throughout this section, we fix an arbitrary odd prime $p$.

\subsection{\label{sub:Restatement}Restatement of the main theorem by a generating
function}

For $x_{1},\dots,x_{d}\in\mathbb{F}_{p}(x)\setminus\mathbb{F}_{p}$,
we put
\[
f(x_{1},\dots,x_{d}):=\sum_{0<n_{1}<\cdots<n_{d}<p}\frac{1}{(n_{1}-x_{1})\cdots(n_{d}-x_{d})}\in\mathbb{F}_{p}(x).
\]
Here we understand that $f(x_{1},\dots,x_{d})=1$ if $d=0$. Then
since $1/(n-x)=\sum_{k=1}^{\infty}x^{k-1}n^{-k}$, the series expansion
of $f(a_{1}x,\dots,a_{d}x)$ at $x=0$ is given by
\[
\sum_{k_{1},\dots,k_{d}\geq1}a_{1}^{k_{1}-1}\cdots a_{d}^{k_{d}-1}H_{p}(k_{1},\dots,k_{d})x^{k_{1}+\cdots+k_{d}-d}\in\mathbb{F}_{p}[[x]]
\]
for $a_{1},\dots,a_{d}\in\mathbb{F}_{p}^{\times}$. Therefore, 
\[
\sum_{\substack{k_{1},\dots,k_{d}\geq1\\
k_{1}+\cdots+k_{d}=k
}
}2^{k_{i}}H_{p}(k_{1},\dots,k_{d})
\]
is equal to the coefficient of $x^{k}$ in
\[
2x^{d}f(\overbrace{x,\dots,x}^{i-1},2x,\overbrace{x,\dots,x}^{d-i})
\]
for every $k\geq0$. Thus Theorem \ref{thm:main_harmonic_sum} is
equivalent to the statement that
\[
f(\overbrace{x,\dots,x}^{i-1},2x,\overbrace{x,\dots,x}^{d-i})=\frac{x^{p-1-d}}{2(x^{p-1}-1)}
\]
holds for every positive odd integer $d<p$ and $i\in\{1,\dots,d\}$.

\subsection{Evaluation of $f(x,\dots,x)$}
\begin{lem}
\label{lem:Fxx}For $0<d<p$, we have
\[
f(\overbrace{x,\dots,x}^{d})=\frac{x^{p-1-d}}{x^{p-1}-1}.
\]
\end{lem}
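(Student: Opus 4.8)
The plan is to recognize $f(\overbrace{x,\dots,x}^{d})$ as the $d$-th elementary symmetric polynomial in the quantities $y_{n}:=1/(n-x)$ for $n=1,\dots,p-1$, and to compute all of these simultaneously via their generating function. Since the defining sum ranges over strictly increasing tuples $0<n_{1}<\cdots<n_{d}<p$, it is exactly $e_{d}(y_{1},\dots,y_{p-1})$. I would therefore study the polynomial identity
\[
\prod_{n=1}^{p-1}(1+ty_{n})=\sum_{d=0}^{p-1}e_{d}(y_{1},\dots,y_{p-1})\,t^{d}
\]
as an element of $\mathbb{F}_{p}(x)[t]$, obtain a closed form for the left-hand side, and read off the coefficient of $t^{d}$ at the end.

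Next I would rewrite the product in closed form. Clearing denominators gives
\[
\prod_{n=1}^{p-1}\Bigl(1+\frac{t}{n-x}\Bigr)=\frac{\prod_{n=1}^{p-1}(n-x+t)}{\prod_{n=1}^{p-1}(n-x)}.
\]
The key input is the factorization $\prod_{n=1}^{p-1}(z-n)=z^{p-1}-1$ in $\mathbb{F}_{p}[z]$, valid because $1,\dots,p-1$ are precisely the roots of $z^{p-1}-1$ over $\mathbb{F}_{p}$. Applying this with $z=x-t$ in the numerator and $z=x$ in the denominator, and using that $p-1$ is even so that the factors $n-x=-(x-n)$ contribute a trivial sign, I obtain that the generating function equals $\bigl((x-t)^{p-1}-1\bigr)/\bigl(x^{p-1}-1\bigr)$.

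Finally I would expand $(x-t)^{p-1}$ using the congruence $\binom{p-1}{j}\equiv(-1)^{j}\pmod p$, which cancels the signs from $(-t)^{j}$ and yields $(x-t)^{p-1}=\sum_{j=0}^{p-1}x^{p-1-j}t^{j}$ in $\mathbb{F}_{p}[x,t]$. Subtracting $1$ absorbs the $j=0$ term into the constant $x^{p-1}-1$, and dividing through gives $1+\sum_{j=1}^{p-1}\frac{x^{p-1-j}}{x^{p-1}-1}\,t^{j}$. The coefficient of $t^{d}$ for $1\leq d\leq p-1$ is then $\frac{x^{p-1-d}}{x^{p-1}-1}$, which is exactly the asserted value of $f(\overbrace{x,\dots,x}^{d})$, and the hypothesis $0<d<p$ guarantees $d$ lies in this range.

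I expect the genuine content to be concentrated in the two modular identities, the factorization of $z^{p-1}-1$ and the value of $\binom{p-1}{j}$ modulo $p$; everything else is bookkeeping with the generating function. The step most likely to require care is the sign tracking: I would verify explicitly that the $(-1)^{p-1}$ arising from $n-x=-(x-n)$ is trivial because $p$ is odd, and that the binomial signs cancel exactly, so that no stray sign survives into the final rational function.
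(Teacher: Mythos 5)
Your proof is correct and follows essentially the same route as the paper: both form the generating function $\prod_{0<n<p}(1+t/(n-x))$, identify it with $((x-t)^{p-1}-1)/(x^{p-1}-1)$ via the factorization $z^{p-1}-1=\prod_{n=1}^{p-1}(z-n)$ over $\mathbb{F}_p$, and extract the coefficient of $t^d$ using $\binom{p-1}{j}\equiv(-1)^j\pmod p$. Your version merely spells out the sign bookkeeping that the paper leaves implicit.
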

\begin{proof}
Put $a_{0}=1$ and $a_{d}=f(\overbrace{x,\dots,x}^{d})$ for $0<d<p$.
Then we have
\[
\prod_{0<n<p}\left(1+\frac{y}{n-x}\right)=\sum_{d=0}^{p-1}a_{d}y^{d}.
\]
On the other hand,
\begin{align*}
\prod_{0<n<p}\left(1+\frac{y}{n-x}\right) & =\prod_{0<n<p}\frac{n-x+y}{n-x}\\
 & =\frac{(x-y)^{p-1}-1}{x^{p-1}-1}\\
 & =1+\sum_{d=1}^{p-1}\frac{x^{p-1-d}}{x^{p-1}-1}y^{d}
\end{align*}
in $\mathbb{F}_{p}(x)[y]$. Thus $a_{d}=x^{p-1-d}/(x^{p-1}-1)$ for
$0<d<p$.
\end{proof}

\subsection{Difference equations}

Put $\tilde{f}(x_{0},x_{1},\dots,x_{d},x_{d+1}):=\frac{1}{x_{0}x_{d+1}}f(x_{1},\dots,x_{d})$
for $x_{0},\dots,x_{d+1}\in\mathbb{F}_{p}(x)\setminus\mathbb{F}_{p}$.
For $a,b\in\mathbb{Z}_{\geq-1}$ with $(a,b)\neq(-1,-1)$, we put
\begin{align*}
G_{a,b}(x):= & x^{2}\tilde{f}(\overbrace{x,\dots,x}^{a+1},2x,\overbrace{x,\dots,x}^{b+1})\\
= & \begin{cases}
f(\overbrace{x,\dots,x}^{a},2x,\overbrace{x,\dots,x}^{b}) & a\geq0\ {\rm and}\ b\geq0,\\
\frac{1}{2}f(\overbrace{x,\dots,x}^{a+b+1}) & a=-1\ {\rm or}\ b=-1
\end{cases}
\end{align*}
and
\begin{align*}
S_{a,b}(x):= & x^{2}\tilde{f}(\overbrace{x,\dots,x}^{a+1},2x-1,\overbrace{x,\dots,x}^{b+1})\\
= & \begin{cases}
f(\overbrace{x,\dots,x}^{a},2x-1,\overbrace{x,\dots,x}^{b}) & a\geq0\ {\rm and}\ b\geq0,\\
\frac{x}{2x-1}f(\overbrace{x,\dots,x}^{a+b+1}) & a=-1\ {\rm or}\ b=-1.
\end{cases}
\end{align*}

\begin{lem}
\label{lem:all_translation}For $d\geq1$, we have
\begin{multline*}
f(x_{1}-1,\dots,x_{d}-1)-f(x_{1},\dots,x_{d})\\
=\frac{1}{x_{1}-1}f(x_{2}-1,\dots,x_{d}-1)-\frac{1}{x_{d}}f(x_{1},\dots x_{d-1}).
\end{multline*}
\end{lem}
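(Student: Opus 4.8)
The plan is to read the simultaneous shift $x_i\mapsto x_i-1$ as a reindexing of the sum defining $f$. Since $n_i-(x_i-1)=(n_i+1)-x_i$, substituting $m_i=n_i+1$ gives
\[
f(x_1-1,\dots,x_d-1)=\sum_{2\le m_1<\cdots<m_d\le p}\frac{1}{(m_1-x_1)\cdots(m_d-x_d)},
\]
which is a sum of exactly the same summand that appears in $f(x_1,\dots,x_d)=\sum_{1\le n_1<\cdots<n_d\le p-1}\frac{1}{(n_1-x_1)\cdots(n_d-x_d)}$, only over a translated index set. First I would subtract these two expressions term by term, exploiting that the summand is literally the same function of the tuple.

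Because the summands agree, every tuple lying in both index sets cancels, and the difference collapses to boundary terms. The tuples in the shifted range but not the original are precisely those with $m_d=p$ (and automatically $m_1\ge 2$), while those in the original range but not the shifted are precisely those with $n_1=1$ (and automatically $n_d\le p-1$); these two families are disjoint. Hence $f(x_1-1,\dots,x_d-1)-f(x_1,\dots,x_d)$ equals the $m_d=p$ sum minus the $n_1=1$ sum. At this point I would use $p\equiv 0\pmod p$: the factor $1/(m_d-x_d)$ at $m_d=p$ becomes $-1/x_d$, and $1/(n_1-x_1)$ at $n_1=1$ equals $1/(1-x_1)=-1/(x_1-1)$.

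It then remains to match the two surviving boundary sums with the right-hand side. The $m_d=p$ boundary sum runs over $2\le m_1<\cdots<m_{d-1}\le p-1$, which is $f(x_1,\dots,x_{d-1})$ with the tuples having first index $1$ removed; likewise the $n_1=1$ boundary sum runs over $2\le n_2<\cdots<n_d\le p-1$, which (after the same reindexing) is $f(x_2-1,\dots,x_d-1)$ with the tuples having top index $p$ removed. Re-expressing both boundary sums through the desired $f$'s therefore introduces two correction terms, each equal to $\pm\frac{1}{x_d(x_1-1)}$ times one and the same interior sum over the middle indices $2,\dots,d-1$. The decisive check is that these two corner corrections are equal and opposite, so they cancel, leaving exactly
\[
\frac{1}{x_1-1}f(x_2-1,\dots,x_d-1)-\frac{1}{x_d}f(x_1,\dots,x_{d-1}).
\]

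The main obstacle is purely organizational: correctly bookkeeping the boundary and corner regions and confirming that the corner term cancels rather than contributing. I expect the cleanest safeguard is to name the common interior sum over indices $2,\dots,d-1$ explicitly and verify that its two coefficients are negatives of each other. I would also dispose of $d=1$ by hand, using the convention $f()=1$: there the reindexing reduces the identity to the elementary telescoping $\sum_{m=2}^{p}\frac{1}{m-x_1}-\sum_{n=1}^{p-1}\frac{1}{n-x_1}=\frac{1}{p-x_1}-\frac{1}{1-x_1}=-\frac{1}{x_1}+\frac{1}{x_1-1}$, which matches the right-hand side and pins down the signs for the general argument.
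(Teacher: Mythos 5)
Your proof is correct and follows essentially the same route as the paper: reindex the shifted sum via $n_i\mapsto n_i+1$ and compare the two index sets over $\mathbb{F}_p$. The paper organizes the comparison as two disjoint-union decompositions of the common superset $\{1\le n_1<\cdots<n_d\le p\}$, so that each boundary slice is exactly a full $f$ and your corner-cancellation step becomes automatic, but the underlying computation is the same.
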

\begin{proof}
Since
\begin{multline*}
\{1<n_{1}<\cdots<n_{d}\leq p\}\sqcup\{1=n_{1}<\cdots<n_{d}\leq p\}\\
=\{0<n_{1}<\cdots<n_{d}<p\}\sqcup\{0<n_{1}<\cdots<n_{d}=p\},
\end{multline*}
we have
\begin{align*}
 & f(x_{1}-1,\dots,x_{d}-1)\\
= & \sum_{1<n_{1}<\cdots<n_{d}\leq p}\frac{1}{(n_{1}-x_{1})\cdots(n_{d}-x_{d})}\\
= & \sum_{0<n_{1}<\cdots<n_{d}<p}\frac{1}{(n_{1}-x_{1})\cdots(n_{d}-x_{d})}+\sum_{0<n_{1}<\cdots<n_{d}=p}\frac{1}{(n_{1}-x_{1})\cdots(n_{d}-x_{d})}\\
 & -\sum_{1=n_{1}<\cdots<n_{d}\leq p}\frac{1}{(n_{1}-x_{1})\cdots(n_{d}-x_{d})}\\
= & f(x_{1},\dots,x_{d})-\frac{1}{x_{d}}f(x_{1},\dots x_{d-1})-\frac{1}{1-x_{1}}f(x_{2}-1,\dots,x_{d}-1).\qedhere
\end{align*}
\end{proof}
\begin{lem}
\label{lem:Gab-Sab}For $a,b\geq0$, we have 
\[
G_{a,b}(x-1)-S_{a,b}(x)=\frac{1}{x-1}G_{a-1,b}(x-1)-\frac{1}{x}S_{a,b-1}(x).
\]
\end{lem}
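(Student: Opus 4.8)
The plan is to apply Lemma~\ref{lem:all_translation} to the single tuple $(x_1,\dots,x_d)$ with $d=a+b+1$ obtained by setting $x_{a+1}=2x-1$ and $x_j=x$ for every $j\neq a+1$. With this choice the definitions of $G_{a,b}$ and $S_{a,b}$ for $a,b\geq0$ immediately give $f(x_1,\dots,x_d)=S_{a,b}(x)$, while shifting every argument down by one turns the distinguished entry $2x-1$ into $2x-2=2(x-1)$ and every other entry into $x-1$, so that $f(x_1-1,\dots,x_d-1)=G_{a,b}(x-1)$. Hence the left-hand side of Lemma~\ref{lem:all_translation} is exactly $G_{a,b}(x-1)-S_{a,b}(x)$, which is the left-hand side of the identity we are after.

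It then remains to identify the two terms on the right-hand side of Lemma~\ref{lem:all_translation} with $\frac{1}{x-1}G_{a-1,b}(x-1)$ and $-\frac{1}{x}S_{a,b-1}(x)$. First I would handle the generic situation. When $a\geq1$ we have $x_1=x$, so the term $\frac{1}{x_1-1}f(x_2-1,\dots,x_d-1)$ becomes $\frac{1}{x-1}$ times the shifted tuple that defines $G_{a-1,b}(x-1)$. Symmetrically, when $b\geq1$ we have $x_d=x$, and the term $-\frac{1}{x_d}f(x_1,\dots,x_{d-1})$ becomes $-\frac{1}{x}S_{a,b-1}(x)$ after deleting the last argument.

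The only genuine point to verify is the boundary behaviour, which I expect to be the main (though still routine) obstacle. When $a=0$ the distinguished entry $2x-1$ occupies the first slot, so $x_1-1=2x-2$ and $\frac{1}{x_1-1}=\frac{1}{2(x-1)}$, while $f(x_2-1,\dots,x_d-1)=f(\overbrace{x-1,\dots,x-1}^{b})$; here the factor $1/2$ built into the definition $G_{-1,b}(x)=\frac12 f(\overbrace{x,\dots,x}^{b})$ is precisely what converts $\frac{1}{2(x-1)}f(\overbrace{x-1,\dots,x-1}^{b})$ into $\frac{1}{x-1}G_{-1,b}(x-1)$. Symmetrically, when $b=0$ the entry $2x-1$ occupies the last slot, $x_d=2x-1$, and the factor $x/(2x-1)$ built into $S_{a,-1}(x)=\frac{x}{2x-1}f(\overbrace{x,\dots,x}^{a})$ turns $-\frac{1}{2x-1}f(\overbrace{x,\dots,x}^{a})$ into $-\frac{1}{x}S_{a,-1}(x)$. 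Thus in every case the two right-hand terms assemble into $\frac{1}{x-1}G_{a-1,b}(x-1)-\frac{1}{x}S_{a,b-1}(x)$, completing the proof. What makes the statement fall out so cleanly is exactly that the $a=-1$ and $b=-1$ values of $G$ and $S$ were defined with these correction factors, so that the two extreme positions of $2x-1$ are absorbed uniformly.
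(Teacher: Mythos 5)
Your proof is correct and follows essentially the same route as the paper: both apply Lemma~\ref{lem:all_translation} to the tuple $(\overbrace{x,\dots,x}^{a},2x-1,\overbrace{x,\dots,x}^{b})$ with $d=a+b+1$. The only cosmetic difference is that the paper absorbs the boundary positions of $2x-1$ uniformly via the $\tilde{f}$ notation and the definitions $G_{a,b}(x)=x^{2}\tilde{f}(\dots)$, $S_{a,b}(x)=x^{2}\tilde{f}(\dots)$, whereas you verify the $a=0$ and $b=0$ cases explicitly against the piecewise definitions---both verifications are accurate.
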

\begin{proof}
Applying Lemma \ref{lem:all_translation} to the case $d=a+b+1$ and
$(x_{1},\dots,x_{d})=(\overbrace{x,\dots,x}^{a},2x-1,\overbrace{x,\dots,x}^{b})$,
we have 
\begin{align*}
 & G_{a,b}(x-1)-S_{a,b}(x)\\
 & =f(x_{1}-1,\dots,x_{d}-1)-f(x_{1},\dots,x_{d})\\
 & =\frac{1}{x_{1}-1}f(x_{2}-1,\dots,x_{d}-1)-\frac{1}{x_{d}}f(x_{1},\dots,x_{d-1})\\
 & =(x-1)\tilde{f}(x_{1}-1,\dots,x_{d}-1,x-1)-x\tilde{f}(x,x_{1},\dots,x_{d})\\
 & =\frac{1}{x-1}G_{a-1,b}(x-1)-\frac{1}{x}S_{a,b-1}(x).\qedhere
\end{align*}
\end{proof}
\begin{lem}
\label{lem:one_translation}For $1\leq i\leq d$, we have
\begin{align*}
 & \tilde{f}(x_{0},\dots,x_{i-1},x_{i}-1,x_{i+1},\dots,x_{d+1})-\tilde{f}(x_{0},\dots,x_{d+1})\\
= & \frac{1}{x_{i}-x_{i-1}-1}\left(\tilde{f}(x_{0},\dots,x_{i-1},x_{i+1},\dots,x_{d+1})-\tilde{f}(x_{0},\dots,x_{i-2},x_{i}-1,x_{i+1},\dots,x_{d+1})\right)\\
 & +\frac{1}{x_{i}-x_{i+1}}\left(\tilde{f}(x_{0},\dots,x_{i},x_{i+2},\dots,x_{d+1})-\tilde{f}(x_{0},\dots,x_{i-1},x_{i+1},\dots,x_{d+1})\right).
\end{align*}
\end{lem}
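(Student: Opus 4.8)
The plan is to exploit the fact that $\tilde{f}$ is nothing but $f$ equipped with two extra ``boundary'' factors, and then to prove the stated single-variable shift identity by a telescoping argument followed by two partial-fraction splittings, arranging things so that the clamped boundaries absorb the edge cases $i=1$ and $i=d$. Concretely, since $p\equiv 0$ in $\mathbb{F}_{p}$ we have $1/x_{0}=-1/(0-x_{0})$ and $1/x_{d+1}=-1/(p-x_{d+1})$, so that
\[
\tilde{f}(x_{0},\dots,x_{d+1})=\sum_{0=n_{0}<n_{1}<\cdots<n_{d}<n_{d+1}=p}\ \prod_{j=0}^{d+1}\frac{1}{n_{j}-x_{j}}.
\]
In this form the arguments $x_{0}$ and $x_{d+1}$ are genuine variables attached to the clamped indices $n_{0}=0$ and $n_{d+1}=p$, and each operation occurring on the right-hand side (dropping an interior argument, or re-labelling a clamp) is realized literally on the summation. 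This is exactly what allows $1\le i\le d$ to be handled uniformly, ends included.

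First I would compute the left-hand side by summing over $n_{i}$ with all other indices fixed. Writing $\Sigma$ and $\Sigma'$ for the inner sums $\sum_{n_{i-1}<n_{i}<n_{i+1}}(n_{i}-x_{i})^{-1}$ before and after the shift $x_{i}\mapsto x_{i}-1$, the substitution $n_{i}\mapsto n_{i}+1$ telescopes the two boundary contributions and gives
\[
\Sigma'-\Sigma=\frac{1}{n_{i+1}-x_{i}}-\frac{1}{(n_{i-1}+1)-x_{i}},
\]
an identity that remains valid when the range of $n_{i}$ is empty, since both sides vanish when $n_{i+1}=n_{i-1}+1$. Hence the left-hand side splits as a ``Term A'' (coming from $1/(n_{i+1}-x_{i})$) minus a ``Term B'' (coming from $1/((n_{i-1}+1)-x_{i})$), each now a clamped sum over the $d+1$ indices other than $n_{i}$.

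Finally I would resolve each term by partial fractions. In Term A the new factor $1/(n_{i+1}-x_{i})$ meets the factor $1/(n_{i+1}-x_{i+1})$ already present, and
\[
\frac{1}{(n_{i+1}-x_{i})(n_{i+1}-x_{i+1})}=\frac{1}{x_{i}-x_{i+1}}\Bigl(\frac{1}{n_{i+1}-x_{i}}-\frac{1}{n_{i+1}-x_{i+1}}\Bigr);
\]
the two resulting clamped sums are precisely $\tilde{f}(x_{0},\dots,x_{i},x_{i+2},\dots,x_{d+1})$ and $\tilde{f}(x_{0},\dots,x_{i-1},x_{i+1},\dots,x_{d+1})$, which reproduces the second line of the claim. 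Symmetrically, in Term B the factor $1/(n_{i-1}-(x_{i}-1))$ meets $1/(n_{i-1}-x_{i-1})$, with partial-fraction coefficient $1/((x_{i}-1)-x_{i-1})=1/(x_{i}-x_{i-1}-1)$, and after reinstating the overall minus sign this yields the first line. Because the clamped representation is used consistently on both sides, all signs fall out automatically.

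The telescoping step and the two partial-fraction splittings are routine; the only real care needed is the index bookkeeping, namely recognizing each reindexed clamped sum as the correct $\tilde{f}$, and especially the cases $i=1$ and $i=d$, where a ``neighbour'' is one of the clamps $n_{0}=0$ or $n_{d+1}=p$. The clamped representation is exactly the device that dissolves these boundary cases, so I anticipate no genuine obstacle beyond keeping the indices straight.
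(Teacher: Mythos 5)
Your proof is correct and is essentially the same as the paper's: your telescoping of the inner sum over $n_i$ (after the shift $n_i\mapsto n_i+1$) is exactly the paper's set identity $S_1\sqcup S_2=S_3\sqcup S_4$ for the clamped chains, and your two partial-fraction splittings of Terms A and B are the paper's evaluations of $A_4$ and $A_2$. The clamped representation with $n_0=0$, $n_{d+1}=p$ handling the cases $i=1$ and $i=d$ uniformly is likewise the device the paper uses.
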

\begin{proof}
Suppose that $1\leq i\leq d$. We define four subsets $S_{1},S_{2},S_{3},S_{4}$
of $\{(n_{0},\dots,n_{d+1})\in\mathbb{Z}^{d+2}\mid n_{0}=0,\; n_{d+1}=p\}$
by
\begin{align*}
S_{1}:= & \{0=n_{0}<\cdots<n_{i-1}<n_{i}-1<n_{i+1}<\cdots<n_{d+1}=p\},\\
S_{2}:= & \{0=n_{0}<\cdots<n_{i-1}=n_{i}-1<n_{i+1}<\cdots<n_{d+1}=p\},\\
S_{3}:= & \{0=n_{0}<\cdots<n_{i-1}<n_{i}<n_{i+1}<\cdots<n_{d+1}=p\},\\
S_{4}:= & \{0=n_{0}<\cdots<n_{i-1}<n_{i}=n_{i+1}<\cdots<n_{d+1}=p\}.
\end{align*}
Put 
\[
A_{k}:=\sum_{(n_{0},\dots,n_{d+1})\in S_{k}}\prod_{j=0}^{d+1}\frac{1}{n_{j}-x_{j}}
\]
for $k\in\{1,2,3,4\}$. It is immediate from the definition that
\begin{align*}
A_{1} & =\tilde{f}(x_{0},\dots,x_{i-1},x_{i}-1,x_{i+1},\dots,x_{d+1}),\\
A_{3} & =\tilde{f}(x_{0},\dots,x_{d+1}).
\end{align*}
We have
\begin{align*}
A_{2}= & \sum_{(n_{0},\dots,n_{d+1})\in S_{2}}\frac{1}{(n_{i-1}-x_{i-1})(n_{i}-x_{i})}\prod_{j\notin\{i-1,i\}}\frac{1}{n_{j}-x_{j}}\\
= & \frac{1}{1-x_{i}+x_{i-1}}\sum_{(n_{0},\dots,n_{d+1})\in S_{2}}\left(\frac{1}{n_{i-1}-x_{i-1}}-\frac{1}{n_{i}-x_{i}}\right)\prod_{j\notin\{i-1,i\}}\frac{1}{n_{j}-x_{j}}\\
= & \frac{1}{1-x_{i}+x_{i-1}}\left(\tilde{f}(x_{0},\dots,x_{i-1},x_{i+1},\dots,x_{d+1})-\tilde{f}(x_{0},\dots,x_{i-2},x_{i}-1,x_{i+1},\dots,x_{d+1})\right)
\end{align*}
and
\begin{align*}
A_{4} & =\sum_{(n_{0},\dots,n_{d+1})\in S_{4}}\frac{1}{(n_{i}-x_{i})(n_{i+1}-x_{i+1})}\prod_{j\notin\{i,i+1\}}\frac{1}{n_{j}-x_{j}}\\
 & =\frac{1}{x_{i}-x_{i+1}}\sum_{(n_{0},\dots,n_{d+1})\in S_{4}}\left(\frac{1}{n_{i}-x_{i}}-\frac{1}{n_{i+1}-x_{i+1}}\right)\prod_{j\notin\{i,i+1\}}\frac{1}{n_{j}-x_{j}}\\
 & =\frac{1}{x_{i}-x_{i+1}}\left(\tilde{f}(x_{0},\dots,x_{i},x_{i+2},\dots,x_{d+1})-\tilde{f}(x_{0},\dots,x_{i-1},x_{i+1},\dots,x_{d+1})\right).
\end{align*}
Since $S_{1}\sqcup S_{2}=S_{3}\sqcup S_{4}$, we have $A_{1}-A_{3}=-A_{2}+A_{4}$.
Thus
\begin{align*}
 & \tilde{f}(x_{0},\dots,x_{i-1},x_{i}-1,x_{i+1},\dots,x_{d+1})-\tilde{f}(x_{0},\dots,x_{d+1})\\
= & \frac{1}{x_{i}-x_{i-1}-1}\left(\tilde{f}(x_{0},\dots,x_{i-1},x_{i+1},\dots,x_{d+1})-\tilde{f}(x_{0},\dots,x_{i-2},x_{i}-1,x_{i+1},\dots,x_{d+1})\right)\\
 & +\frac{1}{x_{i}-x_{i+1}}\left(\tilde{f}(x_{0},\dots,x_{i},x_{i+2},\dots,x_{d+1})-\tilde{f}(x_{0},\dots,x_{i-1},x_{i+1},\dots,x_{d+1})\right).\qedhere
\end{align*}
\end{proof}
\begin{lem}
\label{lem:Sab-Gab}For $a,b\geq0$, we have
\[
S_{a,b}(x)-G_{a,b}(x)=\frac{1}{x(x-1)}f(\overbrace{x,\dots x}^{a+b})-\frac{1}{x-1}S_{a-1,b}(x)+\frac{1}{x}G_{a,b-1}(x)
\]
\end{lem}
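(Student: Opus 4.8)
The plan is to mirror the proof of Lemma \ref{lem:Gab-Sab}, but to drive the computation with the single-coordinate translation of Lemma \ref{lem:one_translation} rather than the global translation of Lemma \ref{lem:all_translation}. Concretely, I would set $d=a+b+1$, take $i=a+1$, and specialize the variables to
\[
(x_{0},\dots,x_{d+1})=(\overbrace{x,\dots,x}^{a+1},2x,\overbrace{x,\dots,x}^{b+1}),
\]
so that $x_{i}=2x$, $x_{i}-1=2x-1$, and both neighbours satisfy $x_{i-1}=x_{i+1}=x$. With this choice the left-hand side of Lemma \ref{lem:one_translation} is exactly
\[
\tilde{f}(\overbrace{x,\dots,x}^{a+1},2x-1,\overbrace{x,\dots,x}^{b+1})-\tilde{f}(\overbrace{x,\dots,x}^{a+1},2x,\overbrace{x,\dots,x}^{b+1})=\frac{1}{x^{2}}\bigl(S_{a,b}(x)-G_{a,b}(x)\bigr),
\]
while the two coefficients collapse to $\frac{1}{x_{i}-x_{i-1}-1}=\frac{1}{x-1}$ and $\frac{1}{x_{i}-x_{i+1}}=\frac{1}{x}$.

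Next I would identify each of the four $\tilde{f}$-terms on the right-hand side of Lemma \ref{lem:one_translation} with one of our generating functions, using only the defining relation $\tilde{f}(y_{0},\dots,y_{m})=\frac{1}{y_{0}y_{m}}f(y_{1},\dots,y_{m-1})$. Deleting the middle entry $2x$ leaves the all-$x$ tuple, giving $\tilde{f}(\overbrace{x,\dots,x}^{a+b+2})=\frac{1}{x^{2}}f(\overbrace{x,\dots,x}^{a+b})$, and this term appears once in each bracket; deleting the left neighbour while keeping $2x-1$ yields $\frac{1}{x^{2}}S_{a-1,b}(x)$; deleting the right neighbour while keeping $2x$ yields $\frac{1}{x^{2}}G_{a,b-1}(x)$. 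Substituting these identifications and multiplying through by $x^{2}$ turns Lemma \ref{lem:one_translation} into
\[
S_{a,b}(x)-G_{a,b}(x)=\frac{1}{x-1}\bigl(f(\overbrace{x,\dots,x}^{a+b})-S_{a-1,b}(x)\bigr)+\frac{1}{x}\bigl(G_{a,b-1}(x)-f(\overbrace{x,\dots,x}^{a+b})\bigr),
\]
and the last step is simply to combine the two copies of $f(\overbrace{x,\dots,x}^{a+b})$ via $\frac{1}{x-1}-\frac{1}{x}=\frac{1}{x(x-1)}$, which reproduces the claimed identity.

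The only place where anything needs care — and hence the main obstacle, such as it is — is the boundary behaviour at $a=0$ and $b=0$, where $S_{a-1,b}=S_{-1,b}$ and $G_{a,b-1}=G_{a,-1}$ fall under the second cases of their definitions. The hard part is therefore checking that the deleted-neighbour identifications remain valid there: when $a=0$ the list $x_{0},\dots,x_{i-2}$ is empty, so the relevant term is
\[
\tilde{f}(2x-1,\overbrace{x,\dots,x}^{b+1})=\frac{1}{(2x-1)x}f(\overbrace{x,\dots,x}^{b})=\frac{1}{x^{2}}\cdot\frac{x}{2x-1}f(\overbrace{x,\dots,x}^{b})=\frac{1}{x^{2}}S_{-1,b}(x),
\]
and the case $b=0$ is entirely symmetric. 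Once these degenerate identifications are confirmed, everything above is purely formal manipulation of rational functions in $\mathbb{F}_{p}(x)$.
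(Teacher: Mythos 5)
Your proposal is correct and is essentially identical to the paper's own proof: the authors likewise apply Lemma \ref{lem:one_translation} with $d=a+b+1$, $i=a+1$, and $(x_{0},\dots,x_{d+1})=(\overbrace{x,\dots,x}^{a+1},2x,\overbrace{x,\dots,x}^{b+1})$, identify the four resulting $\tilde{f}$-terms with $f(\overbrace{x,\dots,x}^{a+b})$, $S_{a-1,b}(x)$, and $G_{a,b-1}(x)$, and multiply by $x^{2}$. Your explicit verification of the degenerate cases $a=0$ and $b=0$ is a welcome extra check that the paper leaves implicit in the definitions of $S_{-1,b}$ and $G_{a,-1}$.
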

\begin{proof}
By applying Lemma \ref{lem:one_translation} to the case $d=a+b+1$,
$i=a+1$ and $(x_{0},\dots,x_{d+1})=(\overbrace{x,\dots,x}^{a+1},2x,\overbrace{x,\dots,x}^{b+1})$,
we have
\begin{align*}
 & S_{a,b}(x)-G_{a,b}(x)\\
 & =x^{2}\left(\tilde{f}(\overbrace{x,\dots,x}^{a+1},2x-1,\overbrace{x,\dots,x}^{b+1})-\tilde{f}(\overbrace{x,\dots,x}^{a+1},2x,\overbrace{x,\dots,x}^{b+1})\right)\\
 & =\frac{x^{2}}{x-1}\left(\tilde{f}(\overbrace{x,\dots,x}^{a+b+2})-\tilde{f}(\overbrace{x,\dots,x}^{a},2x-1,\overbrace{x,\dots,x}^{b+1})\right)\\
 & \ \ \ +\frac{x^{2}}{x}\left(\tilde{f}(\overbrace{x,\dots,x}^{a+1},2x,\overbrace{x,\dots,x}^{b})-\tilde{f}(\overbrace{x,\dots,x}^{a+b+2})\right)\\
 & =\frac{1}{x(x-1)}f(\overbrace{x,\dots x}^{a+b})-\frac{1}{x-1}S_{a-1,b}(x)+\frac{1}{x}G_{a,b-1}(x)\qedhere
\end{align*}

\end{proof}

\subsection{Proof of the main theorem}

Put $U(x):=x^{p-1}/(x^{p-1}-1)$. Note that $f(\overbrace{x,\dots,x}^{d})=x^{-d}U(x)$
for $0<d<p$ by Lemma \ref{lem:Fxx}. By the argument in Section \ref{sub:Restatement},
Theorem \ref{thm:main_harmonic_sum} is equivalent to saying that
\begin{equation}
2G_{a,b}(x)=x^{-a-b-1}U(x)\label{eq:main_statement}
\end{equation}
for $a,b\geq0$ such that $a+b$ is an even integer less than $p-1$.
We will prove (\ref{eq:main_statement}) for $a,b\geq-1$ such that
$a+b$ is a non-negative even integer less than $p-1$ by induction
on $a+b$. First, (\ref{eq:main_statement}) is obvious if $a=-1$,
$b=-1$, or $(a,b)=(0,0)$.

Assume that $a\geq0$, $b\geq0$, and $a+b\geq2$. By Lemmas \ref{lem:Gab-Sab}
and \ref{lem:Sab-Gab}, we have
\begin{align}
G_{a,b}(x-1)-G_{a,b}(x)= & -\frac{1}{x}S_{a,b-1}(x)+\frac{1}{x}G_{a,b-1}(x)\nonumber \\
 & +\frac{1}{x-1}G_{a-1,b}(x-1)-\frac{1}{x-1}S_{a-1,b}(x)\nonumber \\
 & +\frac{U(x)}{x^{a+b+1}(x-1)}.\label{eq:diff}
\end{align}
By Lemma \ref{lem:Sab-Gab} and the induction hypothesis, for $b>0$
we have
\begin{align}
-\frac{1}{x}S_{a,b-1}(x)+\frac{1}{x}G_{a,b-1}(x)= & -\frac{f(\overbrace{x,\dots x}^{a+b-1})}{x^{2}(x-1)}+\frac{1}{x(x-1)}S_{a-1,b-1}(x)-\frac{1}{x^{2}}G_{a,b-2}(x)\nonumber \\
= & \frac{U(x)}{x^{a+b+1}}\left(-\frac{1}{x-1}-\frac{1}{2}\right)+\frac{1}{x(x-1)}S_{a-1,b-1}(x).\label{eq:part_1}
\end{align}
We can check by direct computation that (\ref{eq:part_1}) is also
true for $b=0$.

By Lemma \ref{lem:Gab-Sab} and the induction hypothesis, for $a>0$
we have
\begin{align}
\frac{1}{x-1}G_{a-1,b}(x-1)-\frac{1}{x-1}S_{a-1,b}(x)= & \frac{1}{(x-1)^{2}}G_{a-2,b}(x-1)-\frac{1}{x(x-1)}S_{a-1,b-1}(x)\nonumber \\
= & \frac{U(x-1)}{2(x-1)^{a+b+1}}-\frac{1}{x(x-1)}S_{a-1,b-1}(x).\label{eq:part_2}
\end{align}
We can check by direct computation that (\ref{eq:part_2}) is also
true for $a=0$.

By (\ref{eq:diff}), (\ref{eq:part_1}) and (\ref{eq:part_2}),
\[
G_{a,b}(x-1)-\frac{U(x-1)}{2(x-1)^{a+b+1}}=G_{a,b}(x)-\frac{U(x)}{2x^{a+b+1}}.
\]
Put $q(x):=G_{a,b}(x)-\frac{U(x)}{2x^{a+b+1}}$. Let 
\[
q(x)=\sum_{n=0}^{p-1}\sum_{k=0}^{\infty}\frac{c_{n,k}}{(x-n)^{k}}
\]
be a partial fractional decomposition. Since $q(x-1)=q(x)$, we have
$c_{0,k}=c_{1,k}=\cdots=c_{p-1,k}$. Since $a+b<p-1$, we see that
$q(x)$ has no pole at $x=0$, which implies that $c_{0,k}=0$. Thus
$q(x)=0$, which proves (\ref{eq:main_statement}).

Now we prove Theorem \ref{thm:main_finite_mzv}. The equality (\ref{eq:main_A})
is an immediate consequence of Theorem \ref{thm:main_harmonic_sum}.
To prove (\ref{eq:main_A_star}), we use the following well-known
lemma (for the proof of this lemma, see \cite[Prop 2.9]{Saito} for
example).
\begin{lem}
\label{lem:antipode}For $d\geq1$ and $k_{1},\dots,k_{d}\in\mathbb{Z}_{\geq1}$,
we have
\[
\sum_{j=0}^{d}(-1)^{j}\zeta_{\mathcal{A}}^{\star}(k_{1},\dots,k_{j})\zeta_{\mathcal{A}}(k_{d},\dots,k_{j+1})=0.
\]

\end{lem}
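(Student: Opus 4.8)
The plan is to prove the identity in the stronger, prime-by-prime form: I would show that already in $\mathbb{F}_p$ one has $\sum_{j=0}^{d}(-1)^{j}H_{p}^{\star}(k_{1},\dots,k_{j})H_{p}(k_{d},\dots,k_{j+1})=0$, which immediately yields the stated equality in $\mathcal{A}$. The first step is to expand each factor as an explicit sum over tuples of residues. Writing the star factor as $H_p^{\star}(k_1,\dots,k_j)=\sum_{0<n_1\le\cdots\le n_j<p}\prod_{\ell=1}^j n_\ell^{-k_\ell}$ and using the definition of $H_p$ to rewrite the reversed non-star factor as $H_p(k_d,\dots,k_{j+1})=\sum_{0<n_d<\cdots<n_{j+1}<p}\prod_{\ell=j+1}^d n_\ell^{-k_\ell}$, the product $H_{p}^{\star}(k_{1},\dots,k_{j})H_{p}(k_{d},\dots,k_{j+1})$ becomes a single sum over tuples $(n_1,\dots,n_d)\in\{1,\dots,p-1\}^d$ with weight $\prod_{\ell=1}^d n_\ell^{-k_\ell}$, subject to the constraint that the initial block $n_1\le\cdots\le n_j$ is weakly increasing and the terminal block $n_{j+1}>\cdots>n_d$ is strictly decreasing. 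Note that the chosen orientation of the reversed arguments is exactly what makes the two blocks fit together as one tuple, so that no sign bookkeeping from an $n\mapsto p-n$ symmetry is needed.

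The second step is to interchange the order of summation: sum first over tuples, then over the split point $j$. For a fixed tuple $(n_1,\dots,n_d)$, call $j$ \emph{admissible} if the tuple satisfies both constraints above. The whole expression then equals $\sum_{(n_1,\dots,n_d)}\bigl(\prod_\ell n_\ell^{-k_\ell}\bigr)\sum_{j\text{ admissible}}(-1)^j$, so it suffices to prove that for every fixed tuple the inner alternating sum vanishes.

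The crux --- and the only real content --- is the following combinatorial claim about a fixed tuple: the set of admissible $j$ is either empty or a pair of consecutive integers $\{L-1,L\}$, in which case $\sum_{j\text{ admissible}}(-1)^j=0$. To prove it I would introduce $L$, the position of the first strict descent $n_i>n_{i+1}$ (or $L=d$ if there is none), and $R-1$, the position of the last weak ascent $n_i\le n_{i+1}$ (or $R-1=0$ if there is none); then $j$ is admissible precisely when $R-1\le j\le L$. Every comparison of $n_i$ with $n_{i+1}$ is either a strict descent or a weak ascent, and the two cannot coincide at one position, so $L$ can never equal $R-1$; moreover the positions $1,\dots,L-1$ preceding the first strict descent are all weak ascents, which forces $R-1\ge L-1$. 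Hence the admissible range is either $\{L-1,L\}$ (when $R-1=L-1$) or empty (when $R-1\ge L+1$), proving the claim.

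The main obstacle is precisely verifying this descent/ascent bookkeeping cleanly, including the boundary conventions when $L\in\{1,d\}$ or when the tuple is monotone (all weak ascents, or all strict descents); once the inequality $L-1\le R-1$ and the impossibility $R-1=L$ are established, the cancellation is automatic and holds verbatim in each $\mathbb{F}_p$. I would finally remark that this argument in fact proves a slightly stronger prime-level identity than stated, and that the passage to $\zeta_{\mathcal{A}}$ is then immediate.
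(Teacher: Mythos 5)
Your proof is correct. The paper does not actually prove this lemma---it cites \cite[Prop 2.9]{Saito} and calls it well known---so you are supplying an argument where the authors give only a reference. Your argument is the standard combinatorial proof of this antipode-type relation, and every step checks out: the rewriting of $H_p(k_d,\dots,k_{j+1})$ as a sum over strictly decreasing chains $n_{j+1}>\cdots>n_d$ is the correct unravelling of the reversed index string; the interchange of summation is over a finite set, so there is nothing to justify; and the descent/ascent bookkeeping is sound. Concretely, with $L$ the first strict descent (or $d$ if none) and $R-1$ the last weak ascent (or $0$ if none), admissibility of $j$ is indeed equivalent to $R-1\le j\le L$, one has $R-1\ge L-1$ because positions $1,\dots,L-1$ are all weak ascents, and $R-1=L$ is impossible because a single position cannot be both a strict descent and a weak ascent (and $R-1\le d-1<L$ when $L=d$); hence the admissible set is $\{L-1,L\}$ or empty, and $(-1)^{L-1}+(-1)^{L}=0$. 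The degenerate cases $d=1$ and monotone tuples are covered by the stated conventions. Note that your argument proves more than the lemma asserts: the identity holds exactly in $\mathbb{F}_p$ for every prime $p$ (indeed for harmonic sums truncated at any bound $N$, since nothing in the combinatorics uses primality), and the $\mathcal{A}$-statement follows by projection.
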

It is well known that
\begin{align}
\sum_{\substack{k_{1},\dots,k_{s}\geq1\\
k_{1}+\cdots+k_{s}=k
}
}\zeta_{\mathcal{A}}(k_{1},\dots,k_{s}) & =0,\label{eq:trivial_weight_zero}\\
\sum_{\substack{k_{1},\dots,k_{s}\geq1\\
k_{1}+\cdots+k_{s}=k
}
}\zeta_{\mathcal{A}}^{\star}(k_{1},\dots,k_{s}) & =0\label{eq:trivial_weight_zero_star}
\end{align}
for $s\geq1$ (see \cite{Hoffman}). By Lemma \ref{lem:antipode},
we have
\begin{equation}
\sum_{j=0}^{d}(-1)^{j}\sum_{\substack{k_{1},\dots,k_{d}\geq1\\
k_{1}+\cdots+k_{d}=k
}
}2^{k_{i}}\zeta_{\mathcal{A}}^{\star}(k_{1},\dots,k_{j})\zeta_{\mathcal{A}}(k_{d},\dots,k_{j+1})=0.\label{eq:total_wsum}
\end{equation}
By (\ref{eq:trivial_weight_zero}), we have
\begin{equation}
\sum_{\substack{k_{1},\dots,k_{d}\geq1\\
k_{1}+\cdots+k_{d}=k
}
}2^{k_{i}}\zeta_{\mathcal{A}}^{\star}(k_{1},\dots,k_{j})\zeta_{\mathcal{A}}(k_{d},\dots,k_{j+1})=0\ \ \ \ (i\leq j\leq d-1).\label{eq:part_wsum}
\end{equation}
By (\ref{eq:trivial_weight_zero_star}), we have
\begin{equation}
\sum_{\substack{k_{1},\dots,k_{d}\geq1\\
k_{1}+\cdots+k_{d}=k
}
}2^{k_{i}}\zeta_{\mathcal{A}}^{\star}(k_{1},\dots,k_{j})\zeta_{\mathcal{A}}(k_{d},\dots,k_{j+1})=0\ \ \ \ (1\leq j<i).\label{eq:part_wsum_star}
\end{equation}
By (\ref{eq:total_wsum}), (\ref{eq:part_wsum}), and (\ref{eq:part_wsum_star}),
we have
\[
\sum_{\substack{k_{1},\dots,k_{d}\geq1\\
k_{1}+\cdots+k_{d}=k
}
}2^{k_{i}}\zeta_{\mathcal{A}}(k_{d},\dots,k_{1})=(-1)^{d+1}\sum_{\substack{k_{1},\dots,k_{d}\geq1\\
k_{1}+\cdots+k_{d}=k
}
}2^{k_{i}}\zeta_{\mathcal{A}}^{\star}(k_{1},\dots,k_{d}).
\]
Therefore (\ref{eq:main_A_star}) follows from (\ref{eq:main_A}).
Thus Theorem \ref{thm:main_finite_mzv} is proved.

\section{\label{sec:Further-conjecture}Further conjectures}

We define a general weighted sum of finite multiple zeta values by
\[
W_{k}(n_{1},\dots,n_{d})=\sum_{\substack{k_{1},\dots,k_{d}\geq1\\
k_{1}+\cdots+k_{d}=k
}
}n_{1}^{k_{1}}\cdots n_{d}^{k_{d}}\zeta_{\mathcal{A}}(k_{1},\dots,k_{d}),
\]
\[
W_{k}^{\star}(n_{1},\dots,n_{d})=\sum_{\substack{k_{1},\dots,k_{d}\geq1\\
k_{1}+\cdots+k_{d}=k
}
}n_{1}^{k_{1}}\cdots n_{d}^{k_{d}}\zeta_{\mathcal{A}}^{\star}(k_{1},\dots,k_{d}).
\]
We list several conjectural weighted sum formulas for finite multiple
zeta values that are different from Theorem \ref{thm:main_finite_mzv}.
\begin{conjecture}
\label{conj:1123}For $k,r\in\mathbb{Z}_{\geq1}$, we have
\[
W_{k}(1,1,2,3,\dots,r-1,r)\stackrel{?}{=}0,
\]
\[
W_{k}^{\star}(1,1,2,3,\dots,r-1,r)\stackrel{?}{=}0.
\]

\end{conjecture}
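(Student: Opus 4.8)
The plan is to reduce the non-star formula to a single rational-function identity, to deduce it in $\mathcal{A}$ by reading off a coefficient exactly as in Section \ref{sub:Restatement}, and then to obtain the star formula from the non-star one through the antipode relation of Lemma \ref{lem:antipode}. Write $d=r+1$, so that $(n_1,\dots,n_d)=(1,1,2,3,\dots,r)$ and $n_1\cdots n_d=r!$. By the argument of Section \ref{sub:Restatement}, $W_k(1,1,2,\dots,r)$ is the $\mathcal{A}$-value of the coefficient of $x^k$ in $r!\,x^{d}f(x,x,2x,3x,\dots,rx)$. Hence the first task is to establish the identity
\[
f(x,x,2x,3x,\dots,rx)=\frac{x^{p-2-r}}{r!\,(x^{p-1}-1)}
\]
in $\mathbb{F}_p(x)$, equivalently $r!\,x^{r+1}f(x,x,2x,3x,\dots,rx)=U(x)$ with $U(x)=x^{p-1}/(x^{p-1}-1)$. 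This is the common generalization of Lemma \ref{lem:Fxx} (the case $r=1$) and of (\ref{eq:main_statement}) for $(a,b)=(2,0)$ (the case $r=2$). Granting it, the coefficient of $x^k$ in $U(x)$ vanishes for $0\le k<p-1$, so for each fixed $k$ and every prime $p>k+1$ that coefficient is $0$; therefore $W_k(1,1,2,\dots,r)=0$ in $\mathcal{A}$, which is the first assertion of Conjecture \ref{conj:1123}.

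To prove the rational identity I would induct on $r$ by means of difference equations, generalizing the derivation of (\ref{eq:main_statement}) in Section \ref{sec:proof} but with a larger family of auxiliary functions; the base case $r=1$ is Lemma \ref{lem:Fxx}. The natural tool is again the translation $x\mapsto x-1$ together with Lemmas \ref{lem:all_translation} and \ref{lem:one_translation}. The main obstacle is that under $x\mapsto x-1$ the slope-$j$ argument $jx$ moves to $jx-j$, a shift by $j$ rather than by $1$; bridging $jx$ to $jx-j$ therefore requires $j$ successive unit translations and forces one to introduce auxiliary functions in which the $j$-th block carries an arbitrary integer offset $jx-c$ with $0\le c\le j$ (the functions $G_{a,b}$ and $S_{a,b}$ realizing exactly the offsets $c\in\{0,1\}$ needed when the only nontrivial slope is $2$). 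The resulting system, indexed by such offset data, is finite but considerably larger than the pair $\{G_{a,b},S_{a,b}\}$, and the real work is to verify that its difference equations, combined with the lower cases $r'<r$, close under the induction. Once this yields a relation of the form $q(x-1)=q(x)$ for $q(x):=r!\,x^{r+1}f(x,x,2x,\dots,rx)-U(x)$, the endgame is identical to that of the main proof: a partial-fraction decomposition of the periodic function $q$ has equal coefficients at all $x=n$, and the absence of a pole at $x=0$ (guaranteed once $p>r+2$, since then $r+1<p-1$) forces $q\equiv 0$. An appealing alternative would be to seek a multivariate product formula generalizing $\prod_{0<n<p}(1+y/(n-x))=((x-y)^{p-1}-1)/(x^{p-1}-1)$ that directly produces the arithmetic-progression slopes, but I do not currently see such a formula, which is why I expect the difference-equation route to be the viable one.

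For the star formula I would not repeat the generating-function computation but deduce it from the non-star case by induction on $r$, mirroring the passage from (\ref{eq:main_A}) to (\ref{eq:main_A_star}). Weighting Lemma \ref{lem:antipode} by $n_1^{k_1}\cdots n_d^{k_d}$ and summing over $k_1+\cdots+k_d=k$ expresses $0$ as an alternating sum over the cut point $j$ of convolutions whose left factor is $W^{\star}_{m}(n_1,\dots,n_j)$ and whose right factor is a reversed-weight non-star sum of $(n_{j+1},\dots,n_d)$. For $1\le j\le d-1$ the left factor vanishes: it equals $\zeta^{\star}_{\mathcal{A}}(m)=0$ by (\ref{eq:trivial_weight_zero_star}) when $j=1$, and it equals $W^{\star}_m(1,1,2,\dots,j-1)$, a case of the star formula with smaller parameter $j-1<r$, zero by the induction hypothesis, when $2\le j\le d-1$. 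The term $j=0$ is $W_k(n_d,\dots,n_1)=(-1)^{k}W_k(1,1,2,\dots,r)$ by the well-known reversal $\zeta_{\mathcal{A}}(k_1,\dots,k_d)=(-1)^{k_1+\cdots+k_d}\zeta_{\mathcal{A}}(k_d,\dots,k_1)$, and this is $0$ by the non-star case already proved. Hence only the term $j=d$ survives, giving $(-1)^{d}W^{\star}_k(1,1,2,\dots,r)=0$; the base case $r=1$ is again the star sum formula (\ref{eq:trivial_weight_zero_star}). I expect the one genuine difficulty to be the enlarged difference-equation system of the second paragraph; the reduction and the star-to-non-star passage are routine given the pattern already present in the paper.
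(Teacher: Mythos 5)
This statement is Conjecture \ref{conj:1123}: the paper states it as an open problem and offers no proof, so there is nothing to compare your argument against except its own internal completeness. Your reductions are sound and follow the paper's template faithfully: identifying $W_k(1,1,2,\dots,r)$ with the coefficient of $x^k$ in $r!\,x^{r+1}f(x,x,2x,\dots,rx)$, observing that the target identity $r!\,x^{r+1}f(x,x,2x,\dots,rx)=U(x)$ would finish the non-star case, the periodicity-plus-no-pole endgame, and the antipode induction deducing the star formula from the non-star one are all correct. The target identity also passes small sanity checks (for $p=5$, $r=3$ one computes $(1-x)(2-x)(3-2x)(4-3x)\equiv x^{4}-1 \pmod 5$, matching $x^{p-2-r}/(r!\,(x^{p-1}-1))$ since $3!\equiv 1$).

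The gap is exactly where you flag it, and it is not bookkeeping but the entire content of the conjecture. The paper's proof of (\ref{eq:main_statement}) works because the difference equations for $G_{a,b}$ and $S_{a,b}$ only ever produce members of the same two-parameter family with smaller $a+b$, together with the fully evaluated $f(x,\dots,x)$ of Lemma \ref{lem:Fxx}. For the slope vector $(1,1,2,3,\dots,r)$ that closure fails in at least two ways. First, Lemma \ref{lem:one_translation} deletes a slot, and deleting the slope-$j$ slot from $(x,x,2x,\dots,rx)$ yields a slope vector $(1,1,2,\dots,\widehat{\jmath},\dots,r)$ with a gap, which is not of the form $(1,1,2,\dots,r')$ and hence not covered by your induction hypothesis; you would need closed-form evaluations (analogues of Lemma \ref{lem:Fxx} and of the boundary values of $S_{a,b}$) for this much larger family, and you have not proposed any. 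Second, under $x\mapsto x-1$ every nontrivial slope shifts simultaneously, so the offsets $jx-c$ with $0\le c\le j$ multiply across all $r-1$ slots at once; the resulting system is indexed by arbitrary offset vectors, and you exhibit neither an induction parameter playing the role of $a+b$ nor a demonstration that the system collapses to a single relation $q(x-1)=q(x)$. Until that system is actually written down and closed, the proposal is a plausible research plan rather than a proof.
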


\begin{conjecture}
\label{conj:11233}For $k\in\mathbb{Z}_{\geq1}$ and a positive odd
integer $r$, we have
\[
W_{k}(1,1,2,3,\dots,r-2,r-1,r,r)\stackrel{?}{=}0,
\]
\[
W_{k}^{\star}(1,1,2,3,\dots,r-2,r-1,r,r)\stackrel{?}{=}0.
\]

\end{conjecture}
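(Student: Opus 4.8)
The plan is to follow the generating-function strategy of Section \ref{sec:proof} and to reduce everything to a single closed form. Writing $(n_1,\dots,n_d)=(1,1,2,3,\dots,r-1,r,r)$, so that $d=r+2$, the $p$-component of $W_k(n_1,\dots,n_d)$ is, exactly as in Section \ref{sub:Restatement}, the coefficient of $x^k$ in
\[
n_1\cdots n_d\,x^d f(n_1x,\dots,n_dx)\in\mathbb{F}_p(x),
\]
for every prime $p>r$. I would therefore concentrate on the one rational function $f(x,x,2x,3x,\dots,rx,rx)$. Each factor $1/(m_j-n_jx)$ has its pole at $x=m_j/n_j\in\mathbb{F}_p^{\times}$, so all poles of this function lie in $\mathbb{F}_p^{\times}$, just as for $f(x,\dots,x)$ in Lemma \ref{lem:Fxx}. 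This makes it natural to aim for the closed form
\[
f(x,x,2x,3x,\dots,rx,rx)=c_p\,x^{-d}U(x)=c_p\,\frac{x^{p-1-d}}{x^{p-1}-1}
\]
for all sufficiently large $p$ and a suitable constant $c_p\in\mathbb{F}_p$, with $U(x)=x^{p-1}/(x^{p-1}-1)$ as in Section \ref{sec:proof}. Granting this, the conjecture for $W_k$ follows at once: the product above becomes $n_1\cdots n_d\,c_p\,U(x)$, and since $U(x)=-\sum_{m\ge1}x^{m(p-1)}$ is supported in degrees divisible by $p-1$, its coefficient of $x^k$ vanishes whenever $0<k<p-1$; for fixed $k\ge1$ this holds for all $p>k+1$, whence $W_k=0$ in $\mathcal{A}$.

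To establish the closed form I would imitate the difference-equation argument culminating in (\ref{eq:main_statement}). Set $q(x):=f(x,x,2x,\dots,rx,rx)-c_p\,x^{-d}U(x)$. Both terms are regular at $x=0$ (the first trivially, the second because $d<p-1$), both vanish as $x\to\infty$, and all their poles lie in $\mathbb{F}_p$. Hence, exactly as in the final step of the proof in Section \ref{sec:proof}, it suffices to prove the single periodicity relation $q(x-1)=q(x)$: the cyclic permutation of the poles (all lying in $\mathbb{F}_p$) under $x\mapsto x-1$ then forces the partial-fraction coefficients at the various poles to coincide, regularity at $0$ kills them, and vanishing at infinity leaves $q=0$. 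The periodicity relation would be proved by decomposing the global translation that takes the staircase $(x,x,2x,\dots,rx,rx)$ to $(x-1,x-1,2(x-1),\dots,r(x-1),r(x-1))$ into one-step shifts of single coordinates, each governed by Lemmas \ref{lem:all_translation} and \ref{lem:one_translation}, with $c_p$ pinned down afterwards by matching, for instance, a single residue.

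The main obstacle is that this bookkeeping is far heavier than in the main theorem. There a single coordinate carried the weight $2$, and the lone relation $2x-1\mapsto 2x-2=2(x-1)$ closed the recursion in one step, so that one partner $S_{a,b}$ of $G_{a,b}$ sufficed. For the staircase weights $2,3,\dots,r$, moving the weight-$j$ coordinate from $jx$ to $j(x-1)=jx-j$ requires passing through all the intermediate arguments $jx-1,jx-2,\dots,jx-j$, so one is forced to introduce a whole family of auxiliary functions indexed by the shift performed in each slot and to control the resulting large, coupled system of difference equations simultaneously. A further subtlety is that Lemma \ref{lem:one_translation} cannot be applied at the two doubled endpoints $(x,x)$ and $(rx,rx)$, where adjacent arguments coincide and the factor $1/(x_i-x_{i+1})$ degenerates, so those junctions must be treated by a separate limiting or symmetrization argument. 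Guessing the correct closed form for every member of this auxiliary family---the analogue of the clean identity $2G_{a,b}(x)=x^{-a-b-1}U(x)$---and checking that the induction genuinely closes is, I expect, the part that requires a real new idea rather than routine computation.

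Finally, the star version $W_k^{\star}(1,1,2,\dots,r,r)=0$ would be handled by running the entire argument in parallel for the ``$\le$'' generating function
\[
f^{\star}(x_1,\dots,x_d):=\sum_{0<n_1\le\cdots\le n_d<p}\frac{1}{(n_1-x_1)\cdots(n_d-x_d)},
\]
for which the analogues of Lemmas \ref{lem:all_translation} and \ref{lem:one_translation} hold by the same set-decomposition argument. I would deliberately avoid deducing the star case from the non-star case through Lemma \ref{lem:antipode} as was done for (\ref{eq:main_A_star}): that reduction relied on the weight $2^{k_i}$ sitting at a single position, so that the sum formulas (\ref{eq:trivial_weight_zero}) and (\ref{eq:trivial_weight_zero_star}) annihilate every intermediate product. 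Here the weights $n_1^{k_1}\cdots n_d^{k_d}$ are spread over all coordinates, the intermediate factors are themselves nontrivial weighted sums over proper sub-tuples, and that cancellation no longer takes place.
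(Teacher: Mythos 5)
First, a point of orientation: the statement you are trying to prove is Conjecture \ref{conj:11233}, which the paper explicitly leaves open --- there is no proof in the paper to compare yours against, and the authors state it only as a conjecture. Your proposal should therefore be judged on its own, and on its own it is a research plan rather than a proof. The parts that do work are exactly the parts that transcribe Section \ref{sec:proof}: the identification of the $p$-component of $W_k(1,1,2,\dots,r,r)$ with the coefficient of $x^k$ in $n_1\cdots n_d\,x^d f(n_1x,\dots,n_dx)$, and the final step ``periodicity $+$ poles in $\mathbb{F}_p$ $+$ regularity at $0$ $+$ vanishing at $\infty$ $\Rightarrow$ $q=0$.'' Everything in between is missing, and you say so yourself: the closed form $f(x,x,2x,\dots,rx,rx)=c_p\,x^{-d}U(x)$ is only a guess modelled on Lemma \ref{lem:Fxx} and \eqref{eq:main_statement}; the family of auxiliary functions replacing $G_{a,b}$ and $S_{a,b}$ is not defined; the coupled difference equations replacing Lemmas \ref{lem:Gab-Sab} and \ref{lem:Sab-Gab} are not written down, let alone closed under induction; and the degeneration of Lemma \ref{lem:one_translation} at the doubled coordinates $(x,x)$ and $(rx,rx)$, where the factor $1/(x_i-x_{i+1})$ blows up, is flagged but not resolved. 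A sketch whose central induction ``requires a real new idea'' is not a proof of the statement.

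Two further points deserve attention. First, the constant $c_p$ cannot be ``pinned down afterwards'': if $q_c(x):=f(x,x,2x,\dots,rx,rx)-c\,x^{-d}U(x)$, then $q_c(x-1)-q_c(x)$ differs from $q_{c'}(x-1)-q_{c'}(x)$ by $(c-c')\bigl(x^{-d}U(x)-(x-1)^{-d}U(x-1)\bigr)\neq0$, since $x^{-d}U(x)$ is itself not translation-invariant (this is precisely why the paper must carry the correction term $U(x)/(2x^{a+b+1})$ through the induction with the correct coefficient $\tfrac12$). So periodicity holds for at most one value of $c$, and identifying that value is part of the problem, not an afterthought. Second, the closed form you aim at is strictly stronger than Conjecture \ref{conj:11233}: the conjecture only constrains, for each fixed $k$, the coefficient of $x^k$ for large $p$, whereas the closed form constrains the entire rational function for every $p>r$. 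It is consistent with everything in the paper that the conjecture is true while your target identity is false, so before investing in the heavy bookkeeping you should at least verify the closed form numerically for, say, $r=3$ and a few small primes. Your observation that the antipode reduction via Lemma \ref{lem:antipode} does not carry over to the star case here is correct and worth keeping, but it means the star half of the conjecture requires its own, equally unwritten, argument.
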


\begin{conjecture}
\label{conj:abr}For $k,r\in\mathbb{Z}_{\geq1}$ and $a,b\in\mathbb{Q}$,
we have
\[
W_{k}(a,a+b,a+2b,\dots,a+rb)\stackrel{?}{=}W_{k}(b,a+b,a+2b,\dots,a+rb),
\]
\[
W_{k}^{\star}(a,a+b,a+2b,\dots,a+rb)\stackrel{?}{=}W_{k}^{\star}(b,a+b,a+2b,\dots,a+rb).
\]
\end{conjecture}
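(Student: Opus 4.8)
The plan is to attack Conjecture \ref{conj:abr} by the same generating-function philosophy that drives the proof of Theorem \ref{thm:main_harmonic_sum}, adapted to arithmetic-progression weights. First I would reduce to a statement about $f$. For a fixed weight $k$, the sum $W_{k}(a,a+b,\dots,a+rb)$ is a polynomial in $a,b$ with coefficients in $\mathcal{A}$, so it suffices to prove the identity with $a,b$ treated as formal indeterminates. Writing $d=r+1$ and arguing exactly as in Section \ref{sub:Restatement}, $W_{k}(a,a+b,\dots,a+rb)$ is the coefficient of $x^{k}$ in
\[
a\prod_{j=1}^{r}(a+jb)\,x^{d}\,f(ax,(a+b)x,\dots,(a+rb)x),
\]
while the second tuple produces the same expression with the scalar $a$ and the first argument $ax$ replaced by $b$ and $bx$. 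Cancelling the common factor $\prod_{j=1}^{r}(a+jb)\,x^{d}$, the $W$-statement of the conjecture becomes the assertion that every Taylor coefficient at $x=0$ of
\[
a\,f(ax,(a+b)x,\dots,(a+rb)x)-b\,f(bx,(a+b)x,\dots,(a+rb)x)
\]
vanishes in $\mathcal{A}$, i.e.\ modulo $p$ for all but finitely many $p$. The $W^{\star}$-statement reduces the same way to the analogous expression built from $f^{\star}$, the variant of $f$ with $\le$ in place of $<$.

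Next I would try to evaluate this difference in closed form inside $\mathbb{F}_{p}(x)$, imitating Section \ref{sec:proof}. The two ingredients would be the reflection symmetry $f(x_{1},\dots,x_{d})=(-1)^{d}f(-x_{d},\dots,-x_{1})$, obtained from the substitution $n_{j}\mapsto p-n_{d+1-j}$, and functional equations of the type in Lemmas \ref{lem:all_translation} and \ref{lem:one_translation}. As in the main proof I would introduce generating functions generalising $G_{a,b}$ and $S_{a,b}$, derive a recursion lowering $r$, and then solve it by the periodicity-and-no-pole argument at the end of Section \ref{sec:proof}: a candidate difference $q(x)$ satisfying $q(x-1)=q(x)$ with no pole at the integers must vanish. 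The base cases of small $r$ can be verified directly and provide a useful sanity check.

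The hard part, and almost surely the reason the statement is still conjectural, is twofold. The functional equations available to us shift arguments by the integer $1$, since they arise from reindexing the summation ranges $0<n_{1}<\cdots<n_{d}<p$; but the arguments here are $(a+jb)x$, and replacing $x$ by $x-1$ shifts the $j$-th argument by the non-uniform amount $a+jb$. Thus Lemmas \ref{lem:all_translation} and \ref{lem:one_translation} no longer telescope into the clean two-term recursion that made $G_{a,b},S_{a,b}$ tractable, and one would have to invent functional equations adapted to the progression. Moreover, unlike \eqref{eq:main_statement}, where the closed form holds exactly in $\mathbb{F}_{p}(x)$, here the difference is presumably nonzero in $\mathbb{F}_{p}(x)$: already the case $r=0$ gives $W_{k}(a)-W_{k}(b)=(a^{k}-b^{k})\,H_{p}(k)$ at the level of $H_{p}$, which is nonzero precisely for the primes with $p-1\mid k$. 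Hence one cannot hope for a rational-function identity over a fixed $\mathbb{F}_{p}$; instead one must show the difference is a combination of $1/(x^{p-1}-1)$-type terms whose fixed-degree coefficients vanish for almost all $p$, thereby controlling the $p-1\mid k$ anomaly rather than eliminating it.

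Finally, I would not expect the star case to follow formally from the unstarred one. In Theorem \ref{thm:main_finite_mzv} the passage from \eqref{eq:main_A} to \eqref{eq:main_A_star} used the antipode relation of Lemma \ref{lem:antipode} together with the sum formulas \eqref{eq:trivial_weight_zero} and \eqref{eq:trivial_weight_zero_star}; this succeeded only because every weight except the distinguished $2^{k_{i}}$ equalled $1$, so that the partial sums in \eqref{eq:part_wsum} and \eqref{eq:part_wsum_star} collapsed. With genuinely varying weights $a+jb$ those partial sums are themselves nontrivial weighted sums and no longer vanish, so I would treat the $W$ and $W^{\star}$ cases in parallel through $f$ and $f^{\star}$ rather than deducing one from the other.
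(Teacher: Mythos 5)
This statement is Conjecture \ref{conj:abr}: the paper states it as an open conjecture and gives no proof, so there is nothing on the authors' side to compare your argument against. More importantly, your submission is a programme rather than a proof, and you say as much yourself. The reduction is fine: writing $d=r+1$ and treating $a,b$ as indeterminates, the first identity is indeed equivalent to the vanishing in $\mathcal{A}$, degree by degree, of the Taylor coefficients of $a\,f(ax,(a+b)x,\dots,(a+rb)x)-b\,f(bx,(a+b)x,\dots,(a+rb)x)$, and your $r=0$ sanity check is essentially correct. But everything after that is aspiration. The engine of the paper's proof of Theorem \ref{thm:main_harmonic_sum} is the pair of difference equations (Lemmas \ref{lem:Gab-Sab} and \ref{lem:Sab-Gab}) that close up into the recursion \eqref{eq:diff}, after which the translation-invariance-plus-no-pole argument finishes in one stroke. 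As you correctly observe, that engine does not run here: Lemma \ref{lem:all_translation} shifts every argument by $1$, whereas $x\mapsto x-1$ shifts the $j$-th argument $(a+jb)x$ by the non-uniform amount $a+jb$, so the family of auxiliary functions one would need is not closed under the available functional equations. You name this obstruction but do not overcome it; no substitute functional equation is derived, no recursion is closed, and the periodicity argument is never set up. Identifying the obstacle is not the same as removing it, so the statement remains unproved.

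One smaller point: your claim that the $r=0$ case already rules out a rational-function identity over a fixed $\mathbb{F}_p$ needs care. For integers $a,b$ prime to $p$, Lemma \ref{lem:Fxx} gives $af(ax)=a^{p-1}x^{p-2}/(a^{p-1}x^{p-1}-1)=x^{p-2}/(x^{p-1}-1)=bf(bx)$ by Fermat's little theorem, so the identity does hold in $\mathbb{F}_p(x)$ in that case; the failure you point to occurs only in the formal-indeterminate setting (or when $p$ divides a numerator or denominator of $a$ or $b$), and in $\mathcal{A}$ the $p-1\mid k$ anomaly is harmless because for fixed $k$ only finitely many primes satisfy it. This does not rescue the argument --- the conclusion stands that you have a plausible strategy and an honest list of difficulties, not a proof --- but it suggests that a closed form in $\mathbb{F}_p(x)$, after specialising $a,b$ to $p$-adic units, may be less hopeless than you fear.
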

\begin{rem}
Conjecture \ref{conj:1123} is a special case of Conjecture \ref{conj:abr}.
\end{rem}

\section*{Acknowledgements}

This work was supported by JSPS KAKENHI Grant Numbers JP18J00982,
JP18K03243, and JP18K13392.

\bibliographystyle{plain}
\bibliography{WeightedSum}

\end{document}